\newtheorem{theorem}{Theorem}[section]
\newtheorem{definition}[theorem]{Definition}
\newtheorem{example}[theorem]{Example}
\newtheorem{question}[theorem]{Question}
\title{This is the title}
\begin{document}
\hrule\hrule\hrule\hrule\hrule
\vspace{0.3cm}	
\begin{center}
{\bf{p-adic Heisenberg-Robertson-Schrodinger and p-adic Maccone-Pati Uncertainty Principles}}\\
\vspace{0.3cm}
\hrule\hrule\hrule\hrule\hrule
\vspace{0.3cm}
\textbf{K. Mahesh Krishna}\\
School of Mathematics and Natural Sciences\\
Chanakya University Global Campus\\
NH-648, Haraluru Village\\
Devanahalli Taluk, 	Bengaluru  North District\\
Karnataka State, 562 110, India \\
Email: kmaheshak@gmail.com\\

Date: \today
\end{center}

\hrule\hrule
\vspace{0.5cm}
\textbf{Abstract}: Let  $\mathcal{X}$ be a p-adic Hilbert space. Let $A:	\mathcal{D}(A)\subseteq \mathcal{X}\to  \mathcal{X}$ and $B:	\mathcal{D}(B)\subseteq \mathcal{X}\to  \mathcal{X}$  be  possibly unbounded self-adjoint linear  operators. For $x \in \mathcal{D}(A)$  with $\langle x, x \rangle =1$, define $	\Delta _x(A)\coloneqq \|Ax- \langle Ax, x \rangle x \|.$
Then for all $x \in \mathcal{D}(AB)\cap  \mathcal{D}(BA)$ with $\langle x,  x \rangle =1$, we show that 
\begin{align*}
(1)	\quad \Delta_x(A)+\Delta_x(B)\geq \max\{\Delta_x(A), \Delta_x(B)\}\geq  \frac{\sqrt{\bigg|\big\langle [A,B]x, x \big\rangle ^2+\big(\langle \{A,B\}x, x \rangle -2\langle Ax, x \rangle\langle Bx, x \rangle\big)^2\bigg|}}{\sqrt{|2|}}
\end{align*}
and 
\begin{align*}
(2)	\quad  \Delta_x(A)+\Delta_x(B)\geq 	\max\{\Delta_x(A), \Delta_x(B)\} \geq      	|\langle (A+B)x, y \rangle |, \quad \forall y \in \mathcal{X} \text{ satisfying } \|y\|\leq 1, \langle x, y \rangle =0.
\end{align*}
We call Inequality (1)  as  p-adic Heisenberg-Robertson-Schrodinger uncertainty principle and Inequality (2)  as p-adic Maccone-Pati uncertainty principle.

\textbf{Keywords}:   Uncertainty Principle, p-adic Hilbert space.

\textbf{Mathematics Subject Classification (2020)}: 46S10, 12J25, 32P05.\\

\hrule

\hrule
\section{Introduction}

Let $\mathcal{H}$ be a complex Hilbert space and $A$ be a possibly unbounded self-adjoint linear  operator defined on the domain $\mathcal{D}(A)\subseteq \mathcal{H}$. For $h \in \mathcal{D}(A)$ with $\|h\|=1$, define the uncertainty of $A$ at the point $h$ as 
\begin{align*}
	\Delta _h(A)\coloneqq \|Ah-\langle Ah, h \rangle h \|=\sqrt{\|Ah\|^2-\langle Ah, h \rangle^2}. 
\end{align*}
In 1929, Robertson \cite{ROBERTSON} derived the following mathematical form of the uncertainty principle of Heisenberg derived in 1927 \cite{HEISENBERG}. Recall that for two operators $A:	\mathcal{D}(A)\subseteq \mathcal{H}\to  \mathcal{H}$ and $B:	\mathcal{D}(B)\subseteq \mathcal{H}\to  \mathcal{H}$, we define the commutator $[A,B] \coloneqq AB-BA$ and anti-commutator  $\{A,B\}\coloneqq AB+BA$.
\begin{theorem} \cite{ROBERTSON, HEISENBERG, VONNEUMANNBOOK, DEBNATHMIKUSINSKI} \label{RHT} (\textbf{Heisenberg-Robertson Uncertainty Principle})
Let  $A:	\mathcal{D}(A)\subseteq \mathcal{H}\to  \mathcal{H}$ and $B:	\mathcal{D}(B)\subseteq \mathcal{H}\to  \mathcal{H}$  be self-adjoint linear operators. Then for all $h \in \mathcal{D}(AB)\cap  \mathcal{D}(BA)$ with $\|h\|=1$, we have 
\begin{align}\label{HR}
 \frac{1}{2} \left(\Delta _h(A)^2+	\Delta _h(B)^2\right) \geq \left(\frac{\Delta _h(A)+	\Delta _h(B)}{2}\right)^2 \geq  \Delta _h(A)	\Delta _h(B)   \geq  \frac{1}{2}|\langle [A,B]h, h \rangle |.
\end{align}
\end{theorem}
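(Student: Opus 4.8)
The plan is to establish the three inequalities from right to left, reserving the genuinely operator-theoretic content for the rightmost one. The two leftmost inequalities are purely numerical: writing $a = \Delta_h(A)$ and $b = \Delta_h(B)$, both $\tfrac{1}{2}(a^2+b^2) \geq \left(\tfrac{a+b}{2}\right)^2$ and $\left(\tfrac{a+b}{2}\right)^2 \geq ab$ follow at once from $(a-b)^2 \geq 0$, the first being the quadratic-mean--arithmetic-mean inequality and the second the arithmetic-mean--geometric-mean inequality. So the real work lies in proving $\Delta_h(A)\,\Delta_h(B) \geq \tfrac{1}{2}|\langle [A,B]h, h \rangle|$.

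First I would center the operators. Set $A_0 \coloneqq A - \langle Ah, h \rangle I$ and $B_0 \coloneqq B - \langle Bh, h \rangle I$; because $A$ and $B$ are self-adjoint the scalars $\langle Ah, h \rangle, \langle Bh, h \rangle$ are real, so $A_0$ and $B_0$ are again self-adjoint, and $h \in \mathcal{D}(A_0 B_0) \cap \mathcal{D}(B_0 A_0)$. By definition $\Delta_h(A) = \|A_0 h\|$ and $\Delta_h(B) = \|B_0 h\|$, and the scalar shifts cancel in the commutator so that $[A_0, B_0] = [A, B]$. Applying the Cauchy--Schwarz inequality and then the self-adjointness of $B_0$ gives
\begin{align*}
\Delta_h(A)\,\Delta_h(B) = \|A_0 h\|\,\|B_0 h\| \geq |\langle A_0 h, B_0 h \rangle| = |\langle B_0 A_0 h, h \rangle|.
\end{align*}

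The decisive step is then the commutator/anti-commutator splitting $B_0 A_0 = \tfrac{1}{2}\{A_0, B_0\} - \tfrac{1}{2}[A_0, B_0]$. Here $\{A_0, B_0\}$ is self-adjoint, so $\langle \{A_0, B_0\}h, h \rangle$ is real, whereas $[A_0, B_0]$ is skew-adjoint, so $\langle [A_0, B_0]h, h \rangle = \langle [A, B]h, h \rangle$ is purely imaginary. Consequently the two terms of $\langle B_0 A_0 h, h \rangle$ land in the orthogonal real and imaginary components of $\mathbb{C}$, whence
\begin{align*}
|\langle B_0 A_0 h, h \rangle|^2 = \tfrac{1}{4}\,|\langle \{A_0, B_0\}h, h \rangle|^2 + \tfrac{1}{4}\,|\langle [A, B]h, h \rangle|^2 \geq \tfrac{1}{4}\,|\langle [A, B]h, h \rangle|^2.
\end{align*}
Taking square roots and combining with the Cauchy--Schwarz bound yields $\Delta_h(A)\,\Delta_h(B) \geq \tfrac{1}{2}|\langle [A, B]h, h \rangle|$, which completes the chain.

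I expect the main obstacle to be bookkeeping rather than conceptual: verifying that self-adjointness of $A$ and $B$ forces the expectation values to be real and the commutator expectation to be purely imaginary, and keeping the domain hypothesis $h \in \mathcal{D}(AB) \cap \mathcal{D}(BA)$ intact under the centering so that every inner product above is well defined and the operator manipulations are legitimate for these possibly unbounded operators. Everything else reduces to Cauchy--Schwarz together with the algebraic splitting identity.
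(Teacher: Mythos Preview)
Your argument is correct and is the standard proof of the Heisenberg--Robertson inequality: center the operators, apply Cauchy--Schwarz, and split into the self-adjoint anticommutator and skew-adjoint commutator parts to isolate the imaginary component. Note, however, that the paper does not supply its own proof of this theorem; it is stated with references to the literature as background for the p-adic results that follow, so there is no in-paper argument to compare against.
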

In 1930, Schrodinger made the following improvement of Inequality (\ref{HR}).
\begin{theorem} \cite{SCHRODINGER, BERTLMANNFRIIS}
(\textbf{Heisenberg-Robertson-Schrodinger  Uncertainty Principle}) \label{HRST}
Let  $A:	\mathcal{D}(A)\subseteq \mathcal{H}\to  \mathcal{H}$ and $B:	\mathcal{D}(B)\subseteq \mathcal{H}\to  \mathcal{H}$  be self-adjoint linear operators. Then for all $h \in \mathcal{D}(AB)\cap  \mathcal{D}(BA)$ with $\|h\|=1$, we have 

\begin{align*}
  \Delta _h(A)	\Delta _h(B)    \geq  |\langle Ah, Bh \rangle-\langle Ah, h \rangle \langle Bh, h \rangle|&= \frac{\sqrt{|\langle [A,B]h, h \rangle |^2+|\langle \{A,B\}h, h \rangle -2\langle Ah, h \rangle\langle Bh, h \rangle|^2}}{2}\\
  &=\frac{\sqrt{(\langle \{A,B\}h, h \rangle -2\langle Ah, h \rangle\langle Bh, h \rangle)^2-\langle [A,B]h, h \rangle ^2}}{2}.
\end{align*}	
\end{theorem}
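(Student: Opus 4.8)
The plan is to reduce the displayed inequality to the Cauchy--Schwarz inequality applied to two suitably centered vectors, and then to recover the two equalities by a purely algebraic manipulation that exploits the self-adjointness of $A$ and $B$ together with the (anti-)symmetry of $\{A,B\}$ and $[A,B]$.

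First I would fix a unit vector $h \in \mathcal{D}(AB)\cap\mathcal{D}(BA)$, set $a \coloneqq \langle Ah, h\rangle$ and $b \coloneqq \langle Bh, h\rangle$ (both real, since $A$ and $B$ are self-adjoint), and introduce the centered vectors
\[
f \coloneqq Ah - ah, \qquad g \coloneqq Bh - bh,
\]
so that by definition $\Delta_h(A) = \|f\|$ and $\Delta_h(B) = \|g\|$. Cauchy--Schwarz then gives at once
\[
\Delta_h(A)\,\Delta_h(B) = \|f\|\,\|g\| \geq |\langle f, g\rangle|,
\]
which is the desired inequality, provided I identify $\langle f, g\rangle$ with the covariance $\langle Ah, Bh\rangle - \langle Ah,h\rangle\langle Bh,h\rangle$. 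This I would check by expanding $\langle Ah - ah,\, Bh - bh\rangle$ and using that $a,b$ are real and $\|h\|=1$: three of the four terms each collapse to $ab$, leaving exactly $\langle Ah, Bh\rangle - ab$.

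For the equalities I would rewrite $\langle Ah, Bh\rangle = \langle BAh, h\rangle$ via the self-adjointness of $B$, and then split $BA = \tfrac12\{A,B\} - \tfrac12[A,B]$. The key structural observation is that $\{A,B\}$ is self-adjoint while $[A,B]$ is skew-adjoint, so $\langle\{A,B\}h,h\rangle$ is real and $\langle[A,B]h,h\rangle$ is purely imaginary. Writing $\langle[A,B]h,h\rangle = i\gamma$ with $\gamma$ real, the real part of $\langle f, g\rangle$ equals $\tfrac12(\langle\{A,B\}h,h\rangle - 2ab)$ and its imaginary part equals $-\tfrac12\gamma$, so
\[
|\langle f, g\rangle|^2 = \tfrac14\bigl(\langle\{A,B\}h,h\rangle - 2ab\bigr)^2 + \tfrac14\gamma^2 ,
\]
and since $\gamma^2 = |\langle[A,B]h,h\rangle|^2$ this yields the first equality. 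The second equality then follows from the single identity $\langle[A,B]h,h\rangle^2 = -|\langle[A,B]h,h\rangle|^2$, valid because its left-hand side is the square of a purely imaginary number, which converts the sum of the two squares into the stated difference.

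I do not expect a genuine obstacle here: the argument is Cauchy--Schwarz followed by bookkeeping. The only delicate point is tracking which scalars are real and which are imaginary, and this is governed entirely by the self-adjoint/skew-adjoint dichotomy. It is precisely this step that will have to be reworked for the main theorems, since in the $p$-adic setting there is no notion of ``real part'' or ``purely imaginary'' scalar and the absolute value must be replaced throughout by the $p$-adic valuation.
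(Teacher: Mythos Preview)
Your proof is correct and is essentially the standard argument. Note, however, that the paper does not supply its own proof of this classical statement: Theorem~\ref{HRST} is quoted from the references \cite{SCHRODINGER, BERTLMANNFRIIS} in the introduction as background, so there is nothing in the paper to compare against directly.

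That said, it is worth contrasting your treatment of the equalities with how the paper handles the analogous step in the $p$-adic theorem (part (ii)). You split $\langle f,g\rangle$ into real and imaginary parts via the self-adjoint/skew-adjoint dichotomy of $\{A,B\}$ and $[A,B]$; this is clean but, as you yourself point out, relies essentially on the complex structure. The paper instead proves the purely algebraic identity
\[
\langle [A,B]x,x\rangle^{2}+\bigl(\langle \{A,B\}x,x\rangle-2\langle Ax,x\rangle\langle Bx,x\rangle\bigr)^{2}
=2\bigl(\langle ABx,x\rangle-\langle Ax,x\rangle\langle Bx,x\rangle\bigr)^{2}+2\bigl(\langle BAx,x\rangle-\langle Ax,x\rangle\langle Bx,x\rangle\bigr)^{2},
\]
obtained by brute-force expansion, and then bounds each summand on the right using part~(i). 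This route makes no appeal to real or imaginary parts and therefore survives the passage to a non-Archimedean field; your anticipation in the final paragraph that ``precisely this step will have to be reworked'' is exactly right, and the identity above is how the paper reworks it.
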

 Surprisingly, in 2014,  Maccone and Pati derived the following  uncertainty principle which works for any unit  vector which is  orthogonal to given unit vector  \cite{MACCONEPATI}. 
\begin{theorem}  \cite{MACCONEPATI} (\textbf{Maccone-Pati Uncertainty Principle}) \label{MP}
Let  $A:	\mathcal{D}(A)\subseteq \mathcal{H}\to  \mathcal{H}$ and $B:	\mathcal{D}(B)\subseteq \mathcal{H}\to  \mathcal{H}$  be self-adjoint linear operators. Then for all $h \in \mathcal{D}(A)\cap  \mathcal{D}(B)$ with $\|h\|=1$, we have 
\begin{align*}
	\Delta _h(A)^2+	\Delta _h(B)^2 \geq      \frac{1}{2} \left(|\langle (A+B)h, k \rangle|^2+|\langle (A-B)h, k \rangle|^2\right), \quad \forall k \in \mathcal{H} \text{ satisfying } \|k\|=1, \langle h, k \rangle =0.
\end{align*}		
\end{theorem}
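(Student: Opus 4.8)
The plan is to reduce the inequality to the parallelogram law after centering the operators at $h$. First I would introduce the centered vectors $u \coloneqq Ah - \langle Ah, h \rangle h$ and $v \coloneqq Bh - \langle Bh, h \rangle h$, so that by the very definition of the uncertainty one has $\Delta_h(A) = \|u\|$ and $\Delta_h(B) = \|v\|$; in particular the left-hand side equals $\|u\|^2 + \|v\|^2$. A one-line computation gives $\langle u, h \rangle = \langle Ah, h \rangle - \langle Ah, h \rangle \langle h, h \rangle = 0$, and likewise $\langle v, h \rangle = 0$, so $u$ and $v$ are both orthogonal to $h$.

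The second step is to rewrite the right-hand side in terms of $u$ and $v$. Because $k$ is orthogonal to $h$, we get $\langle Ah, k \rangle = \langle u + \langle Ah, h \rangle h, k \rangle = \langle u, k \rangle + \langle Ah, h \rangle \langle h, k \rangle = \langle u, k \rangle$, and by the same token $\langle Bh, k \rangle = \langle v, k \rangle$. Consequently $\langle (A+B)h, k \rangle = \langle u+v, k \rangle$ and $\langle (A-B)h, k \rangle = \langle u-v, k \rangle$.

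Next I would apply Cauchy--Schwarz to each term, using $\|k\| = 1$, to obtain $|\langle u+v, k \rangle| \leq \|u+v\|$ and $|\langle u-v, k \rangle| \leq \|u-v\|$, whence
\[
|\langle (A+B)h, k \rangle|^2 + |\langle (A-B)h, k \rangle|^2 \leq \|u+v\|^2 + \|u-v\|^2.
\]
Finally, the parallelogram law $\|u+v\|^2 + \|u-v\|^2 = 2\big(\|u\|^2 + \|v\|^2\big)$ turns the right-hand side into $2\big(\Delta_h(A)^2 + \Delta_h(B)^2\big)$, and dividing by $2$ yields the assertion.

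I do not anticipate a serious obstacle here: the single non-routine idea is the centering step together with the observation that the hypothesis $k \perp h$ annihilates the $h$-component of $Ah$ and $Bh$, after which Cauchy--Schwarz and the parallelogram identity do all the work. It is worth noting that self-adjointness is not actually needed for this particular inequality — it serves only to guarantee that $\langle Ah, h \rangle$ and $\langle Bh, h \rangle$ are real — so the essential hypothesis driving the proof is the orthogonality of $k$ to $h$.
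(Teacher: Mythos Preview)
Your argument is correct: the centering $u=Ah-\langle Ah,h\rangle h$, $v=Bh-\langle Bh,h\rangle h$, the orthogonality $k\perp h$ killing the scalar parts, Cauchy--Schwarz, and the parallelogram law combine exactly as you describe to give the stated bound, and your remark that self-adjointness is inessential here is accurate.

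However, there is nothing to compare against: the paper does \emph{not} prove this theorem. Theorem~\ref{MP} is quoted from \cite{MACCONEPATI} as background motivation in the introduction, with no proof supplied. The only related argument the paper gives is for the p-adic analogue (the ``p-adic Maccone--Pati Uncertainty Principle''), and that proof uses precisely the same centering device you use --- rewriting $\langle (A\pm B)x,y\rangle$ as $\langle (Ax-\langle Ax,x\rangle x)\pm(Bx-\langle Bx,x\rangle x),y\rangle$ via $\langle x,y\rangle=0$ --- followed by the p-adic Cauchy--Schwarz inequality and the ultrametric triangle inequality (in place of your parallelogram law) to reach $\max\{\Delta_x(A),\Delta_x(B)\}$ rather than a sum of squares. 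So your approach is entirely in the spirit of what the paper does for its own results; it simply is not a comparison with a proof in the paper, because none exists for this classical statement.
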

As the study of p-adic Hilbert spaces is equally important as the study of Hilbert spaces, we naturally ask the following question.
\begin{question}\label{Q}
	What are p-adic versions of Theorems \ref{HRST} and  \ref{MP}?
\end{question}
In this paper, we answer Question \ref{Q}.

\section{p-adic Heisenberg-Robertson-Schrodinger  Uncertainty Principle and  p-adic Maccone-Pati uncertainty principle}

We are going to consider the  following  notion of  p-adic Hilbert space which is   introduced by Kalisch \cite{KALISCH} in 1947.
\begin{definition} \cite{KALISCH} \label{PADICDEF}
	Let $\mathbb{K}$ be a non-Archimedean  valued field (with valuation $|\cdot|$) and $\mathcal{X}$ be a non-Archimedean Banach space (with norm $\|\cdot\|$) over $\mathbb{K}$. We say that $\mathcal{X}$ is a \textbf{p-adic Hilbert space} if there is a map (called as p-adic inner product) $\langle \cdot, \cdot \rangle: \mathcal{X} \times \mathcal{X} \to \mathbb{K}$ satisfying following.
	\begin{enumerate}[\upshape (i)]
		\item If $x \in \mathcal{X}$ is such that $\langle x,y \rangle =0$ for all $y \in \mathcal{X}$, then $x=0$.
		\item $\langle x, y \rangle =\langle y, x \rangle$ for all $x,y \in \mathcal{X}$.
		\item $\langle x, \alpha y+z \rangle =\alpha \langle x,  y \rangle+\langle x,z\rangle$ for all  $\alpha  \in \mathbb{K}$, for all $x,y,z \in \mathcal{X}$.
		\item $|\langle x, y \rangle |\leq \|x\|\|y\|$ for all $x,y \in \mathcal{X}$.
	\end{enumerate}
\end{definition}
Following are  standard examples.
\begin{example}
	Let $d\in \mathbb{N}$ and 	$\mathbb{K}$ be a non-Archimedean  valued field. 	Then $\mathbb{K}^d$ is a p-adic Hilbert space w.r.t. norm 
	\begin{align*}
		\|(x_j)_{j=1}^d\|\coloneqq \max_{1\leq j \leq d}|x_j|, \quad \forall (x_j)_{j=1}^d\in 	\mathbb{K}^d
	\end{align*}
	and p-adic inner product 
	\begin{align*}
		\langle (x_j)_{j=1}^d, (y_j)_{j=1}^d\rangle \coloneqq \sum_{j=1}^dx_jy_j, \quad \forall (x_j)_{j=1}^d, (y_j)_{j=1}^d\in 	\mathbb{K}^d.
	\end{align*}
\end{example}
\begin{example}
	Let $\mathbb{K}$ be a non-Archimedean  valued field. Define
	\begin{align*}
		c_0(\mathbb{N}, \mathbb{K})\coloneqq \{(x_n)_{n=1}^\infty:x_n \in \mathbb{K}, \forall n \in \mathbb{N}, \lim_{n\to \infty}x_n=0\}.
	\end{align*} 
	Then $c_0(\mathbb{N}, \mathbb{K})$ is a p-adic Hilbert space w.r.t. norm 
	\begin{align*}
		\|(x_n)_{n=1}^\infty\|\coloneqq \sup_{n\in \mathbb{N}}|x_n|, \quad \forall (x_n)_{n=1}^\infty\in 	c_0(\mathbb{N}, \mathbb{K})
	\end{align*}
	and p-adic inner product 
	\begin{align*}
		\langle (x_n)_{n=1}^\infty, (y_n)_{n=1}^\infty\rangle \coloneqq \sum_{n=1}^\infty x_ny_n, \quad \forall (x_n)_{n=1}^\infty, (y_n)_{n=1}^\infty\in 	c_0(\mathbb{N}, \mathbb{K}).
	\end{align*}	
\end{example}
Let $\mathcal{X}, \mathcal{Y}$ be  p-adic Hilbert spaces and $T:\mathcal{X}\to \mathcal{Y}$ be a  linear operator. We say that $T$ is adjointable if there is a  linear operator, denoted by $T^*:\mathcal{Y}\to \mathcal{X}$ such that $\langle Tx,y\rangle =\langle x,T^*y\rangle$, $\forall x \in \mathcal{X}, \forall y \in \mathcal{Y}$. Note that (i) in Definition \ref{PADICDEF} says that adjoint, if exists,  is unique.  An adjointable  linear operator $T:\mathcal{X}\to  \mathcal{X}$ is said to be  self-adjoint  if $T^*=T$. \\
 Let $A$ be a possibly unbounded  linear operator (need not be self-adjoint) defined on domain $\mathcal{D}(A)\subseteq \mathcal{X}$. For $x \in \mathcal{D}(A)$  with $\langle x, x \rangle =1$, define the uncertainty of $A$ at the point $x$ as 
\begin{align*}
	\Delta _x(A)\coloneqq \|Ax- \langle Ax, x \rangle x \|. 
\end{align*}
We now have the p-adic version of Theorem \ref{HRST}.
\begin{theorem}
	(\textbf{p-adic Heisenberg-Robertson-Schrodinger  Uncertainty Principles})
	Let  $\mathcal{X}$ be a p-adic Hilbert space. Let $A:	\mathcal{D}(A)\subseteq \mathcal{X}\to  \mathcal{X}$ and $B:	\mathcal{D}(B)\subseteq \mathcal{X}\to  \mathcal{X}$  be  linear  operators. Then for all $x \in \mathcal{D}(AB)\cap  \mathcal{D}(BA)$ with $\langle x,  x \rangle =1$, we have 
	\begin{enumerate}[\upshape(i)]
		\item 	
		\begin{align*}
			\Delta_x(A) \Delta_x(B)	\geq |\langle Ax, Bx \rangle-\langle Ax, x \rangle \langle Bx, x \rangle|=|\langle Bx, Ax \rangle-\langle Ax, x \rangle \langle Bx, x \rangle|.
		\end{align*}
	In particular, if $A$ and $B$ are self-adjoint, then	
	\begin{align*}
		\Delta_x(A) \Delta_x(B)	\geq |\langle BAx, x \rangle-\langle Ax, x \rangle \langle Bx, x \rangle|=|\langle ABx, x \rangle-\langle Ax, x \rangle \langle Bx, x \rangle|.
	\end{align*}
	\item If $A$ and $B$ are self-adjoint, then	
		\begin{align*}
	\Delta_x(A)+\Delta_x(B)\geq 	\max\{\Delta_x(A), \Delta_x(B)\}\geq  \frac{\sqrt{\bigg|\big\langle [A,B]x, x \big\rangle ^2+\big(\langle \{A,B\}x, x \rangle -2\langle Ax, x \rangle\langle Bx, x \rangle\big)^2\bigg|}}{\sqrt{|2|}}.
	\end{align*}
		\item 	If $A$ and $B$ are adjointable, then 
		\begin{align*}
		\Delta_x(A)+\Delta_x(B)\geq 	\max\{\Delta_x(A), \Delta_x(B)\}\geq \sqrt{\left |\langle(A^*A+B^*B)x, x \rangle-\frac{\langle (A+B)x, x \rangle^2+\langle (A-B)x, x \rangle^2}{2}\right|}.	
		\end{align*}
		In particular, if $A$ and $B$ are self-adjoint, then
		\begin{align*}
			\max\{\Delta_x(A), \Delta_x(B)\}&\geq \sqrt{\left |\langle(A^2+B^2)x, x \rangle-\frac{\langle (A+B)x, x \rangle^2+\langle (A-B)x, x \rangle^2}{2}\right|}\\
			&= \sqrt{\left |\frac{\langle (A+B)^2x, x \rangle+\langle (A-B)^2x, x \rangle-\langle (A+B)x, x \rangle^2-\langle (A-B)x, x \rangle^2}{2}\right|}.
		\end{align*}
		\item 		If $A$ and $B$ are adjointable, then 
		\begin{align*}
		\Delta_x(A)+\Delta_x(B)\geq 	\max\{\Delta_x(A), \Delta_x(B)\}\geq 	\sqrt{ |\langle(A^*A-B^*B)x, x \rangle-\langle (A+B)x, x \rangle\langle (A-B)x, x \rangle|}.	
		\end{align*}
	In particular, if $A$ and $B$ are self-adjoint, then	
	\begin{align*}
		\max\{\Delta_x(A), \Delta_x(B)\}\geq 	 \sqrt{|\langle(A^2-B^2)x, x \rangle-\langle (A+B)x, x \rangle\langle (A-B)x, x \rangle|}.	
	\end{align*}
	\item 	
	\begin{align*}
	\Delta_x(A)+\Delta_x(B)\geq 	\max\{\Delta_x(A), \Delta_x(B)\}\geq \sqrt{|\langle (A+B)x, (A+B)x \rangle-\langle (A+B)x,x \rangle^2|}.	 
	\end{align*}
\item 
\begin{align*}
\Delta_x(A)+\Delta_x(B)\geq 	\max\{\Delta_x(A), \Delta_x(B)\}\geq\sqrt{|\langle (A-B)x, (A-B)x \rangle-\langle (A-B)x,x \rangle^2|}.
\end{align*}
	\end{enumerate}
\end{theorem}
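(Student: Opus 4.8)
The plan is to reduce everything to two facts about the $p$-adic inner product: the Cauchy--Schwarz-type bound (iv) of Definition \ref{PADICDEF}, namely $|\langle w, z\rangle| \le \|w\|\,\|z\|$, and the strong triangle inequality $\|w+z\| \le \max\{\|w\|, \|z\|\}$ of the non-Archimedean norm. Throughout I would set $u \coloneqq Ax - \langle Ax, x\rangle x$ and $v \coloneqq Bx - \langle Bx, x\rangle x$, so that by definition $\Delta_x(A) = \|u\|$ and $\Delta_x(B) = \|v\|$. The leftmost inequality $\Delta_x(A)+\Delta_x(B) \ge \max\{\Delta_x(A), \Delta_x(B)\}$ in every item is immediate, since both uncertainties are non-negative real numbers.

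For item (i), I would expand $\langle u, v\rangle$ using the symmetric bilinearity (axioms (ii) and (iii)) together with $\langle x, x\rangle = 1$; the cross terms collapse and leave exactly $\langle u, v\rangle = \langle Ax, Bx\rangle - \langle Ax, x\rangle\langle Bx, x\rangle$. Applying (iv) then yields $\Delta_x(A)\Delta_x(B) = \|u\|\,\|v\| \ge |\langle u, v\rangle|$, which is the assertion; the two displayed equalities follow from $\langle Ax, Bx\rangle = \langle Bx, Ax\rangle$ (symmetry) and, when $A, B$ are self-adjoint, from moving the operators across the inner product. I would record here the two identities that later drive item (ii): for self-adjoint $A, B$ one has $\langle ABx, x\rangle = \langle Bx, Ax\rangle = \langle Ax, Bx\rangle = \langle BAx, x\rangle$, so that (remarkably, unlike the complex case) $\langle [A,B]x, x\rangle = 0$, while $\langle \{A,B\}x, x\rangle = 2\langle Ax, Bx\rangle$.

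Items (iii)--(vi) all fit the same template, so I would dispatch them together. In each case the quantity under the outer square root is, after an expansion identical to that in item (i), the absolute value of a scalar manifestly built from $u$ and $v$: for (v) it is $\langle u+v, u+v\rangle$, for (vi) it is $\langle u-v, u-v\rangle$, for (iii) it is $\langle u, u\rangle + \langle v, v\rangle$, and for (iv) it is $\langle u, u\rangle - \langle v, v\rangle = \langle u+v, u-v\rangle$ (the last identity by symmetry). I would then bound each by combining (iv) with the strong triangle inequality, e.g. $|\langle u+v, u+v\rangle| \le \|u+v\|^2 \le \max\{\|u\|, \|v\|\}^2$ and $|\langle u, u\rangle + \langle v, v\rangle| \le \max\{|\langle u, u\rangle|, |\langle v, v\rangle|\} \le \max\{\|u\|, \|v\|\}^2$. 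Taking square roots places $\max\{\Delta_x(A), \Delta_x(B)\}$ on the right in every case; the ``in particular'' self-adjoint reformulations are then mere rewritings using $A^* = A$, $B^* = B$ and the parallelogram identity $\langle u, u\rangle + \langle v, v\rangle = \tfrac12\big(\langle u+v, u+v\rangle + \langle u-v, u-v\rangle\big)$ for the symmetric form.

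The delicate point, which I would treat last, is item (ii): its right-hand side is the only one not already of the shape $\sqrt{|\langle w, w\rangle|}$. Using the two identities recorded above, since $\langle [A,B]x, x\rangle = 0$ and $\langle \{A,B\}x, x\rangle - 2\langle Ax, x\rangle\langle Bx, x\rangle = 2\langle u, v\rangle$, the numerator collapses to $\sqrt{|(2\langle u, v\rangle)^2|} = |2|\,|\langle u, v\rangle|$, so the whole right-hand side equals $\sqrt{|2|}\,|\langle u, v\rangle|$, tying item (ii) directly to item (i). I expect this to be the main obstacle: item (i) controls $|\langle u, v\rangle|$ only through the \emph{product} $\|u\|\,\|v\|$, whereas item (ii) demands a bound by the \emph{single} factor $\max\{\|u\|, \|v\|\}$, and keeping track of the exact power of $|2|$ is precisely where the non-Archimedean estimate is most easily mishandled. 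The route I would try is the polarization identity $4\langle u, v\rangle = \langle u+v, u+v\rangle - \langle u-v, u-v\rangle$, feeding the already-proved bounds from (v) and (vi) through the ultrametric inequality; reconciling the resulting estimate with the stated factor $\sqrt{|2|}$ is the step that warrants the most care.
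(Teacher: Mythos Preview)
Your treatment of items (i) and (iii)--(vi) is correct and is, modulo notation, exactly the paper's proof: the paper also expands $\langle u,v\rangle$ for (i) and, for (iii)--(vi), combines the ultrametric inequality with $|\langle w,w\rangle|\le\|w\|^2$ applied to $w\in\{u,v,u\pm v\}$ just as you describe.

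For item (ii) your route diverges from the paper's, and your polarization idea does not close. The paper does \emph{not} use $\langle[A,B]x,x\rangle=0$ inside the proof (that observation is recorded only after the theorem). Instead it performs a purely algebraic expansion yielding
\[
\big\langle[A,B]x,x\big\rangle^{2}+\big(\langle\{A,B\}x,x\rangle-2\langle Ax,x\rangle\langle Bx,x\rangle\big)^{2}=2\alpha^{2}+2\beta^{2},
\]
with $\alpha=\langle ABx,x\rangle-\langle Ax,x\rangle\langle Bx,x\rangle$ and $\beta=\langle BAx,x\rangle-\langle Ax,x\rangle\langle Bx,x\rangle$, bounds $|2\alpha^{2}+2\beta^{2}|\le|2|\max\{|\alpha|^{2},|\beta|^{2}\}$ by the ultrametric inequality, and then finishes by invoking (i) to assert $|\alpha|^{2},|\beta|^{2}\le\max\{\Delta_x(A)^{2},\Delta_x(B)^{2}\}$. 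Since in fact $\alpha=\beta=\langle u,v\rangle$, this last assertion is precisely the inequality $|\langle u,v\rangle|\le\max\{\|u\|,\|v\|\}$ that your reduction also isolates as the crux. Your polarization attempt cannot supply it: feeding (v)--(vi) into $4\langle u,v\rangle=\langle u+v,u+v\rangle-\langle u-v,u-v\rangle$ gives only $|4|\,|\langle u,v\rangle|\le\max\{\|u\|,\|v\|\}^{2}$, a bound that is quadratic in the maximum, and dividing by $|4|=|2|^{2}\le 1$ makes it weaker, not stronger. So the step you flagged as delicate is exactly where the two arguments differ: the paper dispatches it in one line by appealing to (i), whereas polarization is a dead end. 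You should therefore drop the polarization detour and follow the paper's expansion-plus-(i) route --- and, in doing so, check for yourself that the passage from $|\alpha|\le\Delta_x(A)\Delta_x(B)$ furnished by (i) to the squared bound $|\alpha|^{2}\le\max\{\Delta_x(A)^{2},\Delta_x(B)^{2}\}$ claimed there is indeed justified.
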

\begin{proof}
	Let $x \in \mathcal{D}(AB)\cap  \mathcal{D}(BA)$ be such that $\langle x,  x \rangle =1$. 
	\begin{enumerate}[\upshape(i)]
		\item  	By using the definition of p-adic inner product,
		\begin{align*}
			\Delta_x(A) \Delta_x(B)	&=\|Ax- \langle Ax, x \rangle x \|\|Bx- \langle Bx, x \rangle x \|\\
			&\geq 	|\langle Ax- \langle Ax, x \rangle x, Bx- \langle Bx, x \rangle x \rangle |\\
			&=|\langle Ax, Bx \rangle-\langle Ax, x \rangle \langle Bx, x \rangle|.
		\end{align*}
		\item By making a direct expansion and simplification, we see that 
		\begin{align*}
			&\big\langle [A,B]x, x \big\rangle ^2+\big(\langle \{A,B\}x, x \rangle -2\langle Ax, x \rangle\langle Bx, x \rangle\big)^2\\
			&=\big(\langle ABx, x \rangle-\langle BAx, x \rangle\big)^2+\big\langle \{A,B\}x, x \big\rangle^2 +4\langle Ax, x \rangle^2 \langle Bx, x \rangle^2-4\langle \{A,B\}x, x \rangle\langle Ax, x \rangle \langle Bx, x \rangle\\
			&=\big(\langle ABx, x \rangle-\langle BAx, x \rangle\big)^2+\big(\langle ABx, x \rangle+\langle BAx, x \rangle\big)^2+4\langle Ax, x \rangle^2 \langle Bx, x \rangle^2\\
			&\quad -4\langle ABx, x \rangle\langle Ax, x \rangle \langle Bx, x \rangle-4\langle BAx, x \rangle\langle Ax, x \rangle \langle Bx, x \rangle\\
			&=2\langle ABx, x \rangle^2+2\langle BAx, x \rangle^2+4\langle Ax, x \rangle^2 \langle Bx, x \rangle^2 -4\langle ABx, x \rangle\langle Ax, x \rangle \langle Bx, x \rangle-4\langle BAx, x \rangle\langle Ax, x \rangle \langle Bx, x \rangle\\
			&=2(\langle ABx, x \rangle-\langle Ax, x \rangle \langle Bx, x \rangle)^2+2(\langle BAx, x \rangle-\langle Ax, x \rangle \langle Bx, x \rangle)^2.
		\end{align*}
		Therefore (by using previous equation and self-adjointness of $A$ and $B$)
		\begin{align*}
			&\left|\big\langle [A,B]x, x \big\rangle ^2+\big(\langle \{A,B\}x, x \rangle -2\langle Ax, x \rangle\langle Bx, x \rangle\big)^2\right|\\
			&=|2|\left|(\langle ABx, x \rangle-\langle Ax, x \rangle \langle Bx, x \rangle)^2+(\langle BAx, x \rangle-\langle Ax, x \rangle \langle Bx, x \rangle)^2\right|\\
			&\leq |2|\max \left\{\left|(\langle ABx, x \rangle-\langle Ax, x \rangle \langle Bx, x \rangle)^2\right|,\left|(\langle BAx, x \rangle-\langle Ax, x \rangle \langle Bx, x \rangle)^2\right|\right\}\\
			&=|2|\max \left\{\left|\langle ABx, x \rangle-\langle Ax, x \rangle \langle Bx, x \rangle\right|^2,\left|\langle BAx, x \rangle-\langle Ax, x \rangle \langle Bx, x \rangle\right|^2\right\}\\
			&\leq |2|\max \left\{ 	\max\{\Delta_x(A)^2, \Delta_x(B)^2\}, 	\max\{\Delta_x(B)^2, \Delta_x(A)^2\}\right\}=|2|\max\{\Delta_x(A)^2, \Delta_x(B)^2\}.
		\end{align*}
		\item By using the non-Archimedean triangle inequality and the definition of p-adic inner product, 
		
		\begin{align*}
			\max\{\Delta_x(A), \Delta_x(B)\} &=\max\{\|Ax- \langle Ax, x \rangle x \|, \|Bx- \langle Bx, x \rangle x \|\} \\
			&\geq \max\{\sqrt{|\langle Ax- \langle Ax, x \rangle x, Ax- \langle Ax, x \rangle x\rangle|}, \sqrt{|\langle Bx- \langle Bx, x \rangle x,  Bx- \langle Bx, x \rangle x  \rangle|}\} \\
			&=\max\{\sqrt{|\langle Ax, Ax \rangle-\langle Ax, x \rangle^2|}, \sqrt{|\langle Bx, Bx \rangle-\langle Bx, x \rangle^2|}\}\\
			&= \sqrt{\max\{|\langle Ax, Ax \rangle-\langle Ax, x \rangle^2|, |\langle Bx, Bx \rangle-\langle Bx, x \rangle^2|\}}\\
			&\geq \sqrt{|\langle Ax, Ax \rangle-\langle Ax, x \rangle^2+\langle Bx, Bx \rangle-\langle Bx, x \rangle^2|}\\
			&= \sqrt{|\langle A^*Ax, x \rangle+\langle B^*Bx, x \rangle-(\langle Ax, x \rangle^2+\langle Bx, x \rangle^2)|}\\
			&=\sqrt{\left |\langle(A^*A+B^*B)x, x \rangle-\frac{\langle (A+B)x, x \rangle^2+\langle (A-B)x, x \rangle^2}{2}\right|}.
		\end{align*}
		\item Using initial calculations in (iii), 
		\begin{align*}
			\max\{\Delta_x(A), \Delta_x(B)\}&\geq \sqrt{\max\{|\langle Ax, Ax \rangle-\langle Ax, x \rangle^2|, |\langle Bx, Bx \rangle-\langle Bx, x \rangle^2|\}}\\
			&\geq \sqrt{|\langle Ax, Ax \rangle-\langle Ax, x \rangle^2-\langle Bx, Bx \rangle+\langle Bx, x \rangle^2|}\\
			&= \sqrt{|\langle A^*Ax, x \rangle-\langle B^*Bx, x \rangle-(\langle Ax, x \rangle^2-\langle Bx, x \rangle^2)|}\\
			&= \sqrt{|\langle(A^*A-B^*B)x, x \rangle-\langle (A+B)x, x \rangle\langle (A-B)x, x \rangle|}.
		\end{align*}
		\item Using ultrametric inequality first and then using p-adic inner product we get
		\begin{align*}
			\max\{\Delta_x(A), \Delta_x(B)\}&=\max\{\|Ax- \langle Ax, x \rangle x \|, \|Bx- \langle Bx, x \rangle x \|\} \\
			&\geq \|Ax- \langle Ax, x \rangle x +Bx- \langle Bx, x \rangle x \| \\
			&\geq \sqrt{|\langle Ax- \langle Ax, x \rangle x +Bx- \langle Bx, x \rangle x, Ax- \langle Ax, x \rangle x +Bx- \langle Bx, x \rangle x\rangle |}\\
			&=\sqrt{|\langle Ax, Ax \rangle+\langle Bx,Bx \rangle+2\langle Ax, Bx \rangle-2\langle Ax, x \rangle\langle Bx, x \rangle-\langle Ax, x \rangle^2-\langle Bx, x \rangle^2|}\\
			&=\sqrt{|\langle (A+B)x, (A+B)x \rangle-\langle (A+B)x,x \rangle^2|}.
		\end{align*}
	\item Using initial calculations in (v),
	\begin{align*}
		\max\{\Delta_x(A), \Delta_x(B)\}&=\max\{\|Ax- \langle Ax, x \rangle x \|, \|Bx- \langle Bx, x \rangle x \|\} \\
		&\geq \|Ax- \langle Ax, x \rangle x -Bx+ \langle Bx, x \rangle x \| \\
		&\geq \sqrt{|\langle Ax- \langle Ax, x \rangle x -Bx+ \langle Bx, x \rangle x, Ax- \langle Ax, x \rangle x -Bx+ \langle Bx, x \rangle x\rangle |}\\
		&=\sqrt{|\langle (A-B)x, (A-B)x \rangle-\langle (A-B)x,x \rangle^2|}.	
	\end{align*}
	\end{enumerate}
\end{proof}

 Note that for self-adjoint operators $A$ and $B$, we have 
	\begin{align*}
		\langle [A,B]x, x\rangle =	\langle ABx, x\rangle-	\langle BAx, x\rangle=	\langle Bx, Ax\rangle-	\langle Ax, Bx\rangle=\langle Bx, Ax\rangle-	\langle Bx, Ax\rangle=0
	\end{align*}
and 
	\begin{align*}
	\langle \{A,B\}x, x\rangle &=	\langle ABx, x\rangle+	\langle BAx, x\rangle=	\langle Bx, Ax\rangle+	\langle Ax, Bx\rangle=\langle Bx, Ax\rangle+	\langle Bx, Ax\rangle\\
	&=2	\langle Bx, Ax\rangle=2\langle ABx, x\rangle.
\end{align*}

We next derive p-adic version of Theorem \ref{MP}.
\begin{theorem} (\textbf{p-adic Maccone-Pati Uncertainty Principle}) 
Let  $\mathcal{X}$ be a p-adic Hilbert space. 	Let  $A:	\mathcal{D}(A)\subseteq \mathcal{X}\to  \mathcal{X}$ and $B:	\mathcal{D}(B)\subseteq \mathcal{X}\to  \mathcal{X}$  be  linear operators. Then  for all $x \in \mathcal{D}(A)\cap  \mathcal{D}(B)$ with $\langle x, x \rangle=1$, we have 
	\begin{align*}
\Delta_x(A)+\Delta_x(B)\geq 		\max\{\Delta_x(A), \Delta_x(B)\} \geq      	|\langle (A+B)x, y \rangle |, \quad \forall y \in \mathcal{X} \text{ satisfying } \|y\|\leq 1, \langle x, y \rangle =0
	\end{align*}
and 	
	\begin{align*}
\Delta_x(A)+\Delta_x(B)\geq 	\max\{\Delta_x(A), \Delta_x(B)\} \geq      	|\langle (A-B)x, y \rangle |, \quad \forall y \in \mathcal{X} \text{ satisfying } \|y\|\leq 1, \langle x, y \rangle =0.
\end{align*}	
\end{theorem}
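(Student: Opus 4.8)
The plan is to reduce both inequalities to a single application of the ultrametric (strong triangle) inequality combined with property (iv) of the p-adic inner product. The left-hand inequality $\Delta_x(A)+\Delta_x(B)\geq \max\{\Delta_x(A),\Delta_x(B)\}$ is immediate, since $\Delta_x(A)$ and $\Delta_x(B)$ are nonnegative real numbers. For the substantive inequality, first I would introduce the two ``uncertainty vectors'' $u\coloneqq Ax-\langle Ax,x\rangle x$ and $v\coloneqq Bx-\langle Bx,x\rangle x$, so that $\Delta_x(A)=\|u\|$ and $\Delta_x(B)=\|v\|$. Using linearity of the inner product in the first slot (which follows from (ii) and (iii)), one checks $u+v=(A+B)x-\langle (A+B)x,x\rangle x$, and the strong triangle inequality for the non-Archimedean norm gives $\|u+v\|\leq \max\{\|u\|,\|v\|\}=\max\{\Delta_x(A),\Delta_x(B)\}$.

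Next I would pair $u+v$ against an arbitrary $y\in\mathcal{X}$ with $\|y\|\leq 1$ and $\langle x,y\rangle=0$. Expanding and using linearity in the first slot, $\langle u+v,y\rangle=\langle (A+B)x,y\rangle-\langle (A+B)x,x\rangle\langle x,y\rangle$, and the orthogonality $\langle x,y\rangle=0$ annihilates the second term, leaving $\langle u+v,y\rangle=\langle (A+B)x,y\rangle$. Applying property (iv) and $\|y\|\leq 1$ then yields $|\langle (A+B)x,y\rangle|=|\langle u+v,y\rangle|\leq \|u+v\|\,\|y\|\leq \|u+v\|$. Chaining the two bounds gives $\max\{\Delta_x(A),\Delta_x(B)\}\geq \|u+v\|\geq |\langle (A+B)x,y\rangle|$, which is the first assertion. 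For the second assertion I would repeat the argument verbatim with $u-v$ in place of $u+v$; since $\|-v\|=\|v\|$ the ultrametric bound $\|u-v\|\leq \max\{\|u\|,\|v\|\}$ is unchanged, and $u-v=(A-B)x-\langle (A-B)x,x\rangle x$ produces $|\langle (A-B)x,y\rangle|\leq \max\{\Delta_x(A),\Delta_x(B)\}$ in exactly the same way.

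The computation itself has no real obstacle; the only point worth flagging is conceptual. In the classical (complex) Maccone--Pati theorem the lower bound is a sum of squares $|\langle (A+B)h,k\rangle|^2+|\langle (A-B)h,k\rangle|^2$, and its proof relies on a Pythagorean/parallelogram identity. The ultrametric inequality is strictly stronger than the ordinary triangle inequality, so here each of the two inner products is individually dominated by $\max\{\Delta_x(A),\Delta_x(B)\}$, rather than only their squared sum being controlled; this is precisely why the p-adic statement splits into two separate inequalities and dispenses with the squares. The one thing I would take care to verify is that every step uses only axioms (i)--(iv) and the hypothesis $\|y\|\leq 1$, and never invokes a Hilbert-space identity (such as the parallelogram law) that has no counterpart over a non-Archimedean field $\mathbb{K}$.
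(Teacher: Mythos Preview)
Your proposal is correct and follows essentially the same approach as the paper's own proof: rewrite $(A\pm B)x$ inside the inner product using orthogonality $\langle x,y\rangle=0$, apply property~(iv) together with $\|y\|\leq 1$, and then invoke the ultrametric inequality on the sum (or difference) of the two uncertainty vectors. The only cosmetic difference is that you introduce the abbreviations $u,v$ and apply the ultrametric bound before pairing against $y$, whereas the paper applies it afterward; the content is identical.
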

\begin{proof}
	Let $x \in \mathcal{D}(A)\cap  \mathcal{D}(B)$ be such that $\langle x,  x \rangle =1$. Let $y \in \mathcal{X}$ satisfies $\|y\|\leq 1 $ and  $\langle x, y \rangle =0.$ Then 
\begin{align*}
	|\langle (A+B)x, y \rangle |&=	|\langle Ax-\langle Ax, x \rangle x+Bx-\langle Bx, x \rangle x, y \rangle |\\
	&\leq \|Ax-\langle Ax, x \rangle x+Bx-\langle Bx, x \rangle x\|\|y\|\\
	&\leq  \|Ax-\langle Ax, x \rangle x+Bx-\langle Bx, x \rangle x\|\\
	&\leq \max \{\|Ax-\langle Ax, x \rangle x\|, \|Bx-\langle Bx, x \rangle x\|\}\\
	&=\max\{\Delta_x(A), \Delta_x(B)\}.
\end{align*}
\end{proof}
\section{Acknowledgments}
This paper has been partially developed when the author attended the workshop “Quantum groups, tensor categories and quantum field theory”, held in University of Oslo, Norway from January 13 to 17, 2025. This event was organized by University of Oslo, Norway and funded by the Norwegian Research Council through the “Quantum Symmetry” project.

 \bibliographystyle{plain}
 \bibliography{reference.bib}

\begin{thebibliography}{1}

\bibitem{BERTLMANNFRIIS}
Reinhold~A Bertlmann and Nicolai Friis.
\newblock {\em Modern Quantum Theory: From Quantum Mechanics to Entanglement
  and Quantum Information}.
\newblock Oxford University Press, 2023.

\bibitem{DEBNATHMIKUSINSKI}
Lokenath Debnath and Piotr Mikusi\'{n}ski.
\newblock {\em Introduction to {H}ilbert spaces with applications}.
\newblock Academic Press, Inc., San Diego, CA, 1999.

\bibitem{HEISENBERG}
W.~Heisenberg.
\newblock The physical content of quantum kinematics and mechanics.
\newblock In John~Archibald Wheeler and Wojciech~Hubert Zurek, editors, {\em
  Quantum Theory and Measurement}, pages 62--84. Princeton University Press,
  Princeton, NJ, 1983.

\bibitem{KALISCH}
G.~K. Kalisch.
\newblock On {$p$}-adic {H}ilbert spaces.
\newblock {\em Ann. of Math. (2)}, 48:180--192, 1947.

\bibitem{MACCONEPATI}
Lorenzo Maccone and Arun~K. Pati.
\newblock Stronger uncertainty relations for all incompatible observables.
\newblock {\em Phys. Rev. Lett.}, 113(26):260401, 2014.

\bibitem{ROBERTSON}
H.~P. Robertson.
\newblock The uncertainty principle.
\newblock {\em Phys. Rev.}, 34(1):163--164, 1929.

\bibitem{SCHRODINGER}
E.~Schr\"{o}dinger.
\newblock About {H}eisenberg uncertainty relation.
\newblock {\em Bulgar. J. Phys.}, 26(5-6):193--203 (2000), 1999.

\bibitem{VONNEUMANNBOOK}
John von Neumann.
\newblock {\em Mathematical foundations of quantum mechanics}.
\newblock Princeton University Press, Princeton, NJ, 2018.

\end{thebibliography}

\end{document}